\def\R{{{\mathbf R}}}
\def\eps{\varepsilon}
\newenvironment{proof}{\noindent {\bf Proof} }{\endprf\par}
\def \endprf{\hfill  {\vrule height6pt width6pt depth0pt}\medskip}
\def\emph#1{{\it #1}}
\def\textbf#1{{\bf #1}}
\theoremstyle{plain}
  \newtheorem{theorem}[subsection]{Theorem}
  \newtheorem{proposition}[subsection]{Proposition}
  \newtheorem{lemma}[subsection]{Lemma}
  \newtheorem{corollary}[subsection]{Corollary}
\theoremstyle{remark}
  \newtheorem{remark}[subsection]{Remark}
\theoremstyle{definition}
\begin{document}

\title{Noncommutative sets of small doubling}

\author{Terence Tao}
\address{Department of Mathematics, UCLA, Los Angeles CA 90095-1555}
\email{tao@@math.ucla.edu}
\thanks{T. Tao is supported by a grant from the MacArthur Foundation, by NSF grant DMS-0649473, and by the NSF Waterman award.  He also thanks the anonymous referee for corrections.}

\begin{abstract}  A corollary of Kneser's theorem, one sees that any finite non-empty subset $A$ of an abelian group $G = (G,+)$ with $|A + A| \leq (2-\eps) |A|$ can be covered by at most $\frac{2}{\eps}-1$ translates of a finite group $H$ of cardinality at most $(2-\eps)|A|$.  Using some arguments of Hamidoune, we establish an analogue in the noncommutative setting.  Namely, if $A$ is a finite non-empty subset of a nonabelian group $G = (G,\cdot)$ such that $|A \cdot A| \leq (2-\eps) |A|$, then $A$ is either contained in a right-coset of a finite group $H$ of cardinality at most $\frac{2}{\eps}|A|$, or can be covered by at most $\frac{2}{\eps}-1$ right-cosets of a finite group $H$ of cardinality at most $|A|$.  We also note some connections with some recent work of Sanders and of Petridis.
\end{abstract}

\maketitle

\section{Introduction} 

A theorem of Kneser \cite{kneser} asserts that if $A, B$ are finite non-empty subsets of an additive group $G$, then the cardinality $|A+B|$ of the sumset $A+B := \{a+b: a \in A, b \in B\}$ of $A$, $B$ obeys the lower bound
\begin{equation}\label{kneser}
 |A+B| \geq |A| + |B| - |H|
\end{equation}
where $H$ is the \emph{symmetry group} $H = \{ h \in G: A+B+h = A+B \}$ of $A+B$.  It leads to the following corollary, describing sets of additive doubling constant strictly less than $2$:

\begin{corollary}\label{kn}  Let $A$ be a finite non-empty subset of an additive group $G = (G,+)$ such that $|A+A| \leq (2-\eps)|A|$ for some $\eps>0$.  Then there exists a finite group $H$ with $|H| \leq (2-\eps) |A|$, such that $A+A$ is covered by at most $\frac{2}{\eps}-1$ translates of $H$.
\end{corollary}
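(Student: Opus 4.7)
The plan is to apply Kneser's inequality \eqref{kneser} directly with $B = A$. This yields
\[
|A+A| \geq 2|A| - |H|,
\]
where $H$ is the symmetry group $H = \{h \in G : A+A+h = A+A\}$. Combining with the hypothesis $|A+A| \leq (2-\eps)|A|$ immediately gives the lower bound $|H| \geq \eps |A|$.

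Next I would exploit the fact that, by definition of the symmetry group, $A+A$ is a union of cosets of $H$ (in particular $H$ is finite, being contained in a translate of $A+A$). The number of such cosets is exactly $|A+A|/|H|$, which by the two bounds above satisfies
\[
\frac{|A+A|}{|H|} \leq \frac{(2-\eps)|A|}{\eps |A|} = \frac{2}{\eps} - 1.
\]
Since these cosets are themselves translates of $H$ and cover $A+A \supseteq A$, this gives the required covering by at most $\frac{2}{\eps} - 1$ translates of $H$.

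Finally, the size bound $|H| \leq (2-\eps)|A|$ follows trivially because $A+A$ contains at least one coset of $H$, so $|H| \leq |A+A| \leq (2-\eps)|A|$.

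There is no real obstacle here: the only slightly delicate point is verifying that $H$ is finite (which follows from the inclusion of a translate of $H$ inside the finite set $A+A$) and checking that the covering number is an integer at most $\lfloor 2/\eps \rfloor - 1$ or equivalently at most $2/\eps - 1$ as stated. The whole argument is essentially a one-line deduction from Kneser's theorem.
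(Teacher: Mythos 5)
Your proof is correct and follows essentially the same route as the paper: apply Kneser's theorem with $B=A$ to get $|H| \geq \eps|A|$, then use the $H$-invariance of $A+A$ to cover it by $|A+A|/|H| \leq \frac{2}{\eps}-1$ cosets of $H$. Your added remarks on the finiteness of $H$ and the bound $|H| \leq (2-\eps)|A|$ simply make explicit what the paper leaves implicit.
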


\begin{proof}  Applying \eqref{kneser}, we see that the symmetry group $H$ of $A+A$ has cardinality at least $\eps |A|$.  Since $A+A$ is $H$-invariant, it can be covered by $\frac{|A+A|}{|H|}$ translates of $H$, and the claim follows.
\end{proof}

Informally, this corollary asserts that sets of doubling less than $2$ can be ``controlled'' in some sense by finite groups $H$, with the control deteriorating as the doubling constant approaches $2$.  The quantity $\frac{2}{\eps}-1$ appearing in this corollary is sharp, as can be seen by considering the example of the progression $A = \{1,\ldots,N\}$ with $\eps = \frac{1}{N}$.

In the nonabelian setting, Kneser's theorem is known to fail; see for instance \cite{olson} for some counterexamples.  Nevertheless, by using arguments of Hamidoune \cite{hamidoune}, one can at least partially extend Corollary \ref{kn} to this setting:

\begin{theorem}[Weak Kneser-type theorem]\label{main}  Let $A, S$ be a finite non-empty subset of a multiplicative group $G = (G,\cdot)$ such that $|A| \geq |S|$ and $|A \cdot S| \leq (2-\eps)|S|$ for some $\eps>0$.  Then one of the following statements hold:
\begin{itemize}
\item $S$ is contained in a right-coset of a finite group $H$ with $|H| \leq \frac{2}{\eps} |S|$;
\item $S$ is covered by at most $\frac{2}{\eps}-1$ right cosets of a finite group $H$ with $|H| \leq |S|$.
\end{itemize}
\end{theorem}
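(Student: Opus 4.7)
The plan is to locate a finite subgroup $H \leq G$ that left-stabilises $A\cdot S$ and has cardinality at least $\eps|S|$, then translate by $a^{-1}$ for some $a \in A$ to exhibit $S$ as a union of right cosets of a conjugate of $H$. Concretely, set $H := \{g \in G : g\cdot(A\cdot S) = A\cdot S\}$, the left stabiliser of $AS$. Since $AS$ is finite, $H$ sits inside $(AS)\cdot(AS)^{-1}$ and is therefore a finite subgroup of $G$. By construction $H\cdot AS = AS$, so $AS$ is a disjoint union of right cosets $AS = \bigsqcup_{i=1}^{m} H y_i$ with $m = |AS|/|H|$. Fixing any $a \in A$, the inclusion $aS \subset AS$ yields
\[
 S \subset a^{-1}(AS) = \bigcup_{i=1}^{m} a^{-1} H y_i = \bigcup_{i=1}^{m} (a^{-1} H a)(a^{-1} y_i),
\]
writing $S$ as a union of $m$ right cosets of the conjugate subgroup $H' := a^{-1}Ha$, which satisfies $|H'| = |H|$.

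The crux of the matter is the Kneser-type lower bound
\[
 |A\cdot S| \geq |A| + |S| - |H|.
\]
In abelian groups this is Kneser's theorem, and in general nonabelian groups it is false; however, under the hypotheses $|A| \geq |S|$ and $|AS| < 2|S|$ it can be extracted from Hamidoune's isoperimetric method \cite{hamidoune}. I would introduce Hamidoune's notion of a \emph{fragment} (a set $X$ minimising $|XB| - |X|$ over sets of size at least some threshold $k$) and an \emph{atom} (a fragment of minimum cardinality), and invoke the structural result that an atom containing the identity is a finite subgroup of $G$; from this one reads off the desired inequality with $H$ equal to (a conjugate of) this atom, which one then identifies with the left stabiliser of $AS$. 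I expect this step to be the main obstacle, since it requires adapting Hamidoune's framework carefully so that the subgroup produced is precisely the left stabiliser and so that the hypothesis $|A|\geq|S|$ is what triggers the structural conclusion.

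Once the inequality is in hand, the conclusion follows quickly. We have $|H| \geq |A| + |S| - |AS| \geq 2|S| - (2-\eps)|S| = \eps|S|$, so $m = |AS|/|H| \leq (2-\eps)/\eps = 2/\eps - 1$. If $m=1$ then $S$ lies in a single right coset of $H'$ and $|H'| = |AS| \leq (2-\eps)|S| \leq (2/\eps)|S|$, giving the first alternative. If $m\geq 2$ then $|H'| = |AS|/m \leq (2-\eps)|S|/2 \leq |S|$ and $S$ is covered by $m \leq 2/\eps - 1$ right cosets of $H'$, giving the second alternative.
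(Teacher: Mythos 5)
Your reduction at the end (from the inequality $|A\cdot S| \geq |A| + |S| - |H|$ to the two alternatives) is fine, but the inequality itself is precisely where your proof has a genuine gap, and the route you sketch for it does not work. The statement ``$|A\cdot S| \geq |A|+|S|-|H|$ where $H$ is the left stabiliser of $A\cdot S$'' is exactly the nonabelian form of Kneser's theorem, which the paper points out is \emph{false} in general (Olson's counterexamples are cited for this), and your hypotheses $|A|\geq |S|$, $|A\cdot S| < 2|S|$ are not shown to rescue it. More importantly, your proposed derivation conflates two different subgroups: Hamidoune's atom is a finite subgroup $H$ obtained as a minimiser of $c(X) = |X\cdot S| - K|X|$, and the only inequality it hands you is $c(H) = \kappa \leq c(A)$, i.e.\ a bound on $|H\cdot S|$ in terms of $|A\cdot S|$ and $|A|$. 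Nothing in that framework makes $H$ (or a conjugate of it) stabilise $A\cdot S$; the identification ``atom $=$ left stabiliser of $A\cdot S$'', on which your coset count $m = |A\cdot S|/|H|$ rests, is unjustified and in general wrong. Without it, $A\cdot S$ need not decompose into right cosets of $H$ at all, and your covering argument collapses.

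The paper's proof avoids the stabiliser entirely, and this is the key idea you are missing: one covers $S$ not through $A\cdot S$ but through $H\cdot S$, which is \emph{automatically} a union of right cosets of $H$ because $H$ multiplies on the left, whatever $S$ is. Taking $K = 1-\eps/2 < 1$ (so that $c(X) \geq (1-K)|X| > 0$, fragments and atoms exist, and the atom containing the identity is a finite subgroup by submodularity), the inequality $\kappa = c(H) \leq c(A) \leq (1-\eps/2)|S|$ gives $|H| \leq (\tfrac{2}{\eps}-1)|S|$; if $S$ lies in one right coset of $H$ you are in the first alternative, and otherwise $|H\cdot S| \geq 2|H|$ forces $|H| \leq |S|$ and a rearrangement of the same inequality gives $|H\cdot S| \leq (\tfrac{2}{\eps}-1)|H|$, so $S \subseteq H\cdot S$ is covered by at most $\tfrac{2}{\eps}-1$ right cosets of $H$. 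To repair your write-up you would either need to prove your stabiliser-Kneser claim (a substantially harder, and possibly false, statement), or replace the stabiliser by the atom and bound $|H\cdot S|$ as above --- which is exactly the paper's argument.
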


This result follows from the methods of Hamidoune \cite{hamidoune}; as it is not explicitly stated there, we give the full proof below for the convenience of the reader.  Similar structural results were obtained in \cite{hamidoune} for sets of doubling constant less than $\frac{8}{3}$, and in \cite{freiman} for sets of doubling constant less than $\frac{1+\sqrt{5}}{2}$.  Sets $S$ of doubling less than $\frac{3}{2}$ can be described completely; see \cite{freiman}.

\section{Hamidoune connectivity}

The proof of Theorem \ref{main} relies on the concept of the \emph{connectivity} of a finite subset $S$ of a group $G$, as developed by Hamidoune
\cite{ham1}-\cite{hls}.  We generalise Hamidoune's theory slightly by introducing an additional real parameter $K$.

More precisely, given a finite subset $S$ of a multiplicative group $G$ and a real number $K$, we define the quantities $c(A) = c_{K,S}(A)$ for finite (and possibly empty) subsets of $G$ by the formula
\begin{equation}\label{cdef}
 c(A) := |A\cdot S| - K|A|.
\end{equation}
This measures the extent to which $S$ ``expands'' $A$, compared against the reference expansion constant $K$.  Clearly, this quantity is left-invariant, thus
\begin{equation}\label{left}
 c( x \cdot A ) = c( A )
\end{equation}
for all finite sets $A$ and $x \in G$.  It also obeys an important \emph{submodularity inequality} (which was also implicitly exploited recently by Petridis \cite{petridis}):

\begin{lemma}[Submodularity]\label{submodular}  For any finite subsets $A, B, S$ of $G$, and any $K \in \R$ one has
$$ c( A \cup B ) + c( A \cap B ) \leq c( A ) + c( B ).$$
\end{lemma}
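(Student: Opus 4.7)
The plan is to observe that both $|\cdot|$ and $K|\cdot|$ are modular on unions/intersections, so the $-K|A|$ part of $c$ contributes zero to the inequality. Indeed, since
$$|A \cup B| + |A \cap B| = |A| + |B|,$$
the terms $-K|A \cup B| - K|A \cap B|$ and $-K|A| - K|B|$ already cancel, so the problem reduces to proving
$$|(A \cup B) \cdot S| + |(A \cap B) \cdot S| \leq |A \cdot S| + |B \cdot S|.$$

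To handle this, I would use two elementary set-theoretic facts about left-multiplication by $S$. First, product distributes over union: $(A \cup B) \cdot S = (A \cdot S) \cup (B \cdot S)$. Second, product only \emph{sub}-distributes over intersection: $(A \cap B) \cdot S \subseteq (A \cdot S) \cap (B \cdot S)$, since any element of the left-hand side is of the form $x \cdot s$ with $x \in A \cap B$, hence lies in both $A \cdot S$ and $B \cdot S$. (The reverse inclusion can fail, but it is not needed.) Combining these with the modularity identity
$$|X \cup Y| + |X \cap Y| = |X| + |Y|$$
applied to $X = A \cdot S$, $Y = B \cdot S$ gives the required bound:
$$|(A \cup B) \cdot S| + |(A \cap B) \cdot S| \leq |(A \cdot S) \cup (B \cdot S)| + |(A \cdot S) \cap (B \cdot S)| = |A \cdot S| + |B \cdot S|.$$

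There is no serious obstacle here: the only non-equality step is the inclusion $(A \cap B) \cdot S \subseteq (A \cdot S) \cap (B \cdot S)$, and this is where the slack in the submodularity inequality comes from. The argument works identically in the commutative and noncommutative settings, and the parameter $K$ plays no role beyond canceling out, which is the reason it can be introduced as a free parameter in Hamidoune's framework.
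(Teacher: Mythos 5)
Your proof is correct and follows essentially the same route as the paper: the $K$-terms cancel by the modularity identity $|A\cup B|+|A\cap B|=|A|+|B|$, and the cardinality inequality follows from $(A\cup B)\cdot S=(A\cdot S)\cup(B\cdot S)$, $(A\cap B)\cdot S\subseteq (A\cdot S)\cap(B\cdot S)$, and inclusion-exclusion applied to $A\cdot S$ and $B\cdot S$. Nothing is missing.
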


\begin{proof}  From the inclusion-exclusion principle one has
\begin{equation}\label{aub}
 |A \cup B| + |A \cap B| = |A| + |B|
\end{equation}
and
$$ |(A \cdot S) \cup (B \cdot S)| + |(A \cdot S) \cap (B \cdot S)| = |A \cdot S| + |B \cdot S|.$$
Since
$$ (A \cup B) \cdot S = (A \cdot S) \cup (B \cdot S)$$
and
$$ (A \cap B) \cdot S \subset (A \cdot S) \cap (B \cdot S)$$
we thus have
$$ |(A \cup B) \cdot S| + |(A \cap B) \cdot S| \leq |A \cdot S| + |B \cdot S|,$$
and the claim follows after subtracting $K$ copies of \eqref{aub}.
\end{proof}

Following\footnote{Hamidoune only considered the $K=1$ case, but much of his machinery extends to the case $K \leq 1$, and in fact becomes slightly simpler for $K<1$.} Hamidoune \cite{hamidoune}, we make the following definitions, for fixed $K$ and $S$:
\begin{itemize}
\item The \emph{connectivity} $\kappa = \kappa_K(S)$ is the infimum of $c(A)$ over all finite non-empty sets $A$.  
\item A \emph{fragment} is a finite non-empty set $A$ that attains the infimum $\kappa$: $c(A) = \kappa$.
\item An \emph{atom} is a finite non-empty fragment $A$ of minimal cardinality.
\end{itemize}

From \eqref{left} we see that any left-translate of a fragment is a fragment, and any left-translate of an atom is an atom.

If $K<1$, then we have
\begin{equation}\label{cka-0}
  c(A) \geq (1-K)|A|
\end{equation}
for any $A$.  In particular, $c(A)$ is always positive for non-empty $A$, and takes on a discrete set of values.  This implies that when $K < 1$, then $\kappa$ is positive, hence at least one fragment exists, which (by the well-ordering principle) implies that at least one atom exists.

Let $A$ and $B$ be fragments with non-empty intersection, then from Lemma \ref{submodular} we have
$$ c(A \cup B) + c(A \cap B) \leq c(A) + c(B) = 2\kappa.$$
On the other hand, since $A \cup B$ and $A \cap B$ are finite and non-empty, we have $c(A \cup B), c(A \cap B) \geq \kappa$.  This forces $c(A \cup B) = c(A \cap B) = \kappa$, and so $A \cup B$ and $A \cap B$ are also fragments.  Specialising to atoms (which, by definition, do not contain any strictly smaller fragments), we conclude that any two atoms $A, B$ are either equal or disjoint.  From this and the left-invariance of the atoms, we see that there is a unique atom $H$ that contains the identity.  This atom is either equal or disjoint to any of its left-translates, which implies that $H$ is a finite group.

We summarise the above discussion as follows:

\begin{proposition}  Let $K < 1$.  Then there exists a finite group $H$, such that every left-coset of $H$ is an atom (and furthermore, these are the only atoms).
\end{proposition}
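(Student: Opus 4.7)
The plan is to build directly on the dichotomy established in the preceding paragraphs: under the hypothesis $K<1$ atoms exist, and any two atoms are either equal or disjoint. From this I would extract a distinguished atom, promote it to a subgroup, and then identify all other atoms as its left-cosets.

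First, I would define $H$ to be the unique atom containing the identity element $1_G$. Existence follows because some atom $A_0$ exists, and by the left-invariance \eqref{left} the translate $a^{-1} \cdot A_0$ is again an atom for any $a \in A_0$, placing $1_G$ inside some atom; uniqueness is just the equal-or-disjoint dichotomy applied at the point $1_G$.

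Second, I would show $H$ is a subgroup. For each $h \in H$, left-invariance says $hH$ is an atom; since $h \in H \cap hH$, the dichotomy forces $hH = H$. Hence $H$ is closed under multiplication. Since $H$ is finite and non-empty, the map $x \mapsto hx$ on $H$ is an injection into itself, hence a bijection, and the preimage of $1_G$ under this map supplies an inverse for $h$. Thus $H$ is a subgroup of $G$.

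Finally, I would observe that every left-coset $xH$ is an atom by \eqref{left}, and conversely any atom $A$ meets $xH$ at the point $x \in A$, so the dichotomy forces $A = xH$. The only delicate step is the middle one: the passage from multiplicative closure to subgroup structure is routine but genuinely uses the finiteness of $H$ (which is built into the definition of atom as a fragment of minimal cardinality) and thus cannot be skipped in the nonabelian setting.
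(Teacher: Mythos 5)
Your proposal is correct and follows essentially the same route as the paper: it invokes the equal-or-disjoint dichotomy for atoms (from submodularity) together with left-invariance \eqref{left}, takes $H$ to be the unique atom through the identity, and upgrades it to a group, with cosets exhausting the atoms. You merely spell out the details (multiplicative closure plus finiteness giving inverses, and the coset identification) that the paper leaves implicit in the sentence ``this atom is either equal or disjoint to any of its left-translates, which implies that $H$ is a finite group.''
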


There are more complicated analogues of this proposition for $K=1$; see \cite{hamidoune}.

\section{Proof of Theorem \ref{main}}

We may now prove Theorem \ref{main}.  Let $A, S, \eps, G$ be as in that theorem.  We will take $K$ to be the quantity
$$ K := 1 - \eps/2.$$
From \eqref{cka-0} we then have
\begin{equation}\label{cka}
  c(A) \geq \eps |A| / 2
\end{equation}
for all $A$.

Now we use the hypothesis that $|A| \geq |S|$ and $|A \cdot S| \leq (2-\eps) |S|$, which implies that
$$ c(A) \leq (2-\eps)|S| - (1-\eps/2) |S| = (1-\eps/2) |S|.$$
In particular, we have $c(H) = \kappa \leq c(A) \leq (1 - \eps/2) |S|$, which by \eqref{cka-0} implies an upper bound on $H$:
$$
|H| \leq (\frac{2}{\eps} - 1) |S|.$$
This concludes the proof if $S$ is contained in a single right-coset of $H$, so suppose that $S$ intersects at least two such right-cosets.
We then expand the inequality $c(H) \leq (1-\eps/2) |S|$ as
\begin{equation}\label{hs}
 |H \cdot S| \leq (1 - \eps/2) |S| + (1-\eps/2) |H|.
\end{equation}
As $S$ intersects at least two right-cosets, we have $|H \cdot S| \geq 2|H|$, and so $|H| \leq |S|$.  Also, if we bound $|S|$ in \eqref{hs} crudely by $|H \cdot S|$ and rearrange, we conclude that
$$ |H \cdot S| \leq (\frac{2}{\eps} - 1) |H|.$$
We conclude that $H \cdot S$ (and hence $S$) can be covered by at most $\frac{2}{\eps} - 1$ translates of $H$, and the claim follows.

\begin{remark} It is possible to obtain some further structural control on $S$ under these sorts of hypotheses, by variants of the above method; see \cite{hamidoune}.
\end{remark}

\section{A result of Petridis}

We observe that we can rephrase a recent argument of Petridis \cite[Theorem 1.5]{petridis} using Lemma \ref{submodular}.  Namely, we reprove

\begin{theorem} Let $A, S$ be finite non-empty subsets of a multiplicative group $G$ such that $|A\cdot S| \leq \alpha |A|$ for some $\alpha \in \R$.  Then there exists a non-empty subset $X$ of $A$ such that $|C\cdot X\cdot S| \leq \alpha |C\cdot X|$ for all finite subsets $C$ of $G$.
\end{theorem}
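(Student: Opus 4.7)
The plan is to apply the submodularity lemma in the spirit of Petridis, with a carefully chosen value of $K$. First I would set
$$ K := \min_{\emptyset \neq Y \subseteq A} \frac{|Y\cdot S|}{|Y|}, $$
which is well-defined since the minimum is taken over a finite set of rationals, and satisfies $K \leq \alpha$ because the choice $Y=A$ already gives a ratio of at most $\alpha$. Let $X$ be a (non-empty) minimizer, so that $|X\cdot S| = K|X|$, i.e.\ $c_{K,S}(X)=0$, and moreover $c_{K,S}(Y) \geq 0$ for every non-empty $Y \subseteq A$ by the defining minimality.

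The main step is to prove by induction on $|C|$ that $c_{K,S}(C\cdot X) \leq 0$, which is a stronger conclusion than what is needed (since $K \leq \alpha$). The base case $|C| = 1$ follows from left-invariance \eqref{left}: $c_{K,S}(c\cdot X) = c_{K,S}(X) = 0$.

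For the inductive step, write $C = C' \cup \{c\}$ with $c \notin C'$. Applying Lemma \ref{submodular} to the sets $C'\cdot X$ and $c\cdot X$ (with the same $K$ and $S$) gives
$$ c_{K,S}(C\cdot X) + c_{K,S}(C'\cdot X \cap c\cdot X) \leq c_{K,S}(C'\cdot X) + c_{K,S}(c\cdot X), $$
where I used $C'\cdot X \cup c\cdot X = C\cdot X$. The right-hand side is at most $0$ by the inductive hypothesis and the base case. The key observation — and the one point that needs care — is that $c_{K,S}(C'\cdot X \cap c\cdot X) \geq 0$: if the intersection is empty this is trivial (since $c_{K,S}(\emptyset) = 0$ from the convention), while if it is non-empty, then writing $Y := c^{-1}(C'\cdot X \cap c\cdot X)$ we obtain a non-empty subset of $X \subseteq A$, and left-invariance together with the minimality of $K$ gives $c_{K,S}(C'\cdot X \cap c\cdot X) = c_{K,S}(Y) \geq 0$. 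Rearranging yields $c_{K,S}(C\cdot X) \leq 0$, completing the induction.

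The main obstacle is recognizing that the intersection $C'\cdot X \cap c\cdot X$, when non-empty, is a left-translate of a subset of $X$ (hence of $A$), which is precisely what lets the minimality of $K$ kick in; without this, submodularity alone would not close the induction. Everything else is bookkeeping.
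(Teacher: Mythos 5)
Your proof is correct and follows essentially the same route as the paper: minimize the ratio $|Y\cdot S|/|Y|$ over non-empty $Y\subseteq A$ to get $c_{K,S}(X)=0$ and $c_{K,S}(Y)\geq 0$ for $Y\subseteq X$, then combine Lemma \ref{submodular} with left-invariance to absorb the translates $c\cdot X$ one at a time. Your induction on $|C|$, resting on the observation that $C'\cdot X\cap c\cdot X$ is a left-translate of a subset of $X$, is exactly the paper's iteration of the inequality $c(gX\cup Z)\leq c(Z)$ in different packaging.
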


\begin{proof}  Let $X$ be a non-empty subset of $A$ that minimises the quantity
$$ K := \frac{|X \cdot S|}{|X|}$$
over all non-empty subsets of $X$.  Then $K \leq \alpha$, and using the quantities $c()$ defined in \eqref{cdef}, we have
$$ c(X) = 0$$
and
$$ c(Y) \geq 0$$
for all subsets $Y$ of $X$ (including the empty set).  Applying Lemma \ref{submodular}, we see that
$$ c( X \cup Z ) \leq c(Z)$$
for any finite subset $Z$ of $G$; by left-invariance, we more generally have
$$ c( gX \cup Z ) \leq c(Z)$$
for any finite $Z \subset G$ and every $g \in G$.  Iterating this, we see that
$$ c( C\cdot X \cup Z ) \leq c(Z)$$
for all finite sets $C, Z \subset G$; specialising $Z$ to be the empty set, we conclude that
$$ c(C\cdot X) \leq 0$$
and thus
$$ |C \cdot X \cdot S| \leq K |C \cdot X|$$
and the claim follows.
\end{proof}

\section{An argument of Sanders}

In this section we record a Fourier-analytic argument of Sanders (private communication) that obtains a weaker qualitative version of Theorem \ref{main}, but also illustrates a connection between discrete results in additive combinatorics and their continuous counterparts.  To motivate this argument, let us first establish a continuous qualitative analogue of this theorem, in the context of open precompact sets in a locally compact group $G$ with a bi-invariant non-trivial Haar measure $\mu$.  

\begin{proposition} Let $G$ be a locally compact Hausdorff group with a bi-invariant Haar measure $\mu$, and let $A \subset G$ be an open precompact non-empty subset of $G$ such that $\mu( A^{-1} \cdot A ) \leq (2-\eps) \mu( A )$ for some $\eps>0$.  Then there exists a compact open subgroup $H$ of $G$, such that $A \cdot A^{-1}$ is the union of finitely many right cosets of $H$.
\end{proposition}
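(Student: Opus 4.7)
The approach is to translate the discrete argument from Theorem~\ref{main} into the Haar-measure setting. Define, for each open precompact $E \subset G$, the continuous analogue of \eqref{cdef} by
\[ c(E) := \mu(E \cdot A) - (1 - \eps/2)\,\mu(E). \]
The submodularity inequality of Lemma~\ref{submodular} transfers verbatim, since its proof used only inclusion-exclusion (which holds for any measure) and the pointwise inclusion $(E_1 \cap E_2) \cdot A \subset (E_1 \cdot A) \cap (E_2 \cdot A)$. Picking any $a \in A$, right-invariance gives $\mu(E \cdot A) \geq \mu(E \cdot a) = \mu(E)$, so $c(E) \geq (\eps/2)\mu(E)$ for all nonempty open precompact $E$. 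Evaluating $c$ at $E = A^{-1}$ and using $\mu(A^{-1}) = \mu(A)$, the hypothesis gives
\[ c(A^{-1}) = \mu(A^{-1} \cdot A) - (1-\eps/2)\mu(A) \leq (1-\eps/2)\mu(A). \]

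Imitating \S2, I would set $\kappa := \inf c(E)$ over nonempty open precompact $E$ and extract an ``atom'' $H$: a minimizer of $c$ of minimal Haar measure, which by left-invariance of $c$ may be assumed to contain the identity. The argument of \S2 then shows $H$ is a subgroup; being open and precompact by construction, and closed as an open subgroup, $H$ is a compact open subgroup. Running the case split from the proof of Theorem~\ref{main} gives $\mu(H) \leq (2/\eps - 1)\mu(A)$ and shows that either $A \subset Hg$ for a single right coset (in which case $A \cdot A^{-1} \subset (Hg)(g^{-1}H) = H$, a single coset), or $A$ is covered by at most $2/\eps - 1$ right cosets $Hg_i$, in which case $A \cdot A^{-1} \subset \bigcup_{i,j} H(g_i g_j^{-1})H$ is contained in a finite union of double cosets, each a finite union of right cosets of $H$ by compactness and $\mu(H) > 0$. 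To make the containment an equality (as the statement appears to require), one can replace $H$ by the finite intersection of its conjugates $\bigcap_{i,j} (g_i g_j^{-1})^{-1} H (g_i g_j^{-1})$, which is again compact open and renders $A \cdot A^{-1}$ a genuine union of its cosets.

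The main obstacle is the existence of the atom. In the discrete setting, $c$ takes values in a well-ordered set so the infimum is attained automatically; here, $c$ is real-valued and a minimizer need not exist. I would resolve this by invoking van Dantzig's theorem to produce a compact open subgroup $K \subset G$: the hypothesis forces $G$ to be ``essentially'' totally disconnected, since a Brunn--Minkowski-type inequality gives $\mu(A^{-1} \cdot A) \geq 2\mu(A)$ for any open precompact $A$ in any connected Lie group, contradicting the assumption. Restricting the minimization of $c$ to sets $E$ that are unions of left cosets of $K$ discretizes the problem -- the values of $c$ then lie in $\mu(K) \cdot \R$ in a controlled way -- so the infimum is attained, and the resulting atom $H$, being a union of $K$-cosets, is automatically open.
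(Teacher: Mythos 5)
The proposal takes a genuinely different route from the paper (a Hamidoune-style transfer rather than the paper's convolution argument), but it has two genuine gaps. The first concerns the conclusion itself. Even granting the existence of the atom, your machinery yields only a \emph{covering} statement: $A$ lies in finitely many right cosets of $H$, hence $A\cdot A^{-1}$ is \emph{contained in} finitely many double cosets. The proposition asserts that $A\cdot A^{-1}$ \emph{is} a union of right cosets of $H$, i.e.\ $H\cdot(A\cdot A^{-1})=A\cdot A^{-1}$, an invariance property of the exact set. Your proposed fix, replacing $H$ by a finite intersection of conjugates, cannot produce this: shrinking the subgroup never upgrades containment in a union of cosets to equality (every subset of $G$ is contained in a union of cosets of every subgroup), and likewise in your single-coset branch $A\cdot A^{-1}\subset H$ does not give $A\cdot A^{-1}=H$. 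The paper obtains the invariance from a mechanism your argument never supplies: $f=\frac{1}{\mu(A)}1_A*1_{A^{-1}}$ is continuous and compactly supported, the doubling hypothesis forces the value gap $f\geq\eps$ on the support $A\cdot A^{-1}$, so that support is clopen and compact, hence fixed under left multiplication by a whole neighbourhood $U$ of the identity; taking $H$ to be the group generated by $U$ (open, contained in the compact set $A\cdot A^{-1}$, hence compact) gives the exact coset decomposition. Nothing in the combinatorial covering argument substitutes for this continuity step.

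The second gap is the existence of the compact open subgroup $K$ used to discretize the minimization of $c$. A compact open subgroup exists only when the identity component of $G$ is compact, and your justification that the hypothesis forces this---``$\mu(A^{-1}\cdot A)\geq 2\mu(A)$ for any open precompact $A$ in any connected Lie group''---is false: take $G=\R/\Z$ or $SO(3)$ and $A=G$, where $\mu(A^{-1}\cdot A)=\mu(A)$. The correct statement is a Kemperman-type inequality $\mu(A^{-1}\cdot A)\geq\min(2\mu(A),\mu(G))$ for connected groups, and what the hypothesis actually forces is that the identity component be \emph{compact}, not trivial; converting that into a compact open subgroup of a general locally compact $G$ requires structure theory (van Dantzig applied to the quotient by the identity component, after proving its compactness), which is essentially the nontrivial content of the proposition---so the reduction as sketched is close to circular and is in any case not carried out. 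There are further technical wrinkles in the discretized minimization (for instance $A^{-1}$ is not a union of left $K$-cosets, so the comparison of $\kappa$ with $c(A^{-1})$ requires choosing $K$ small enough that $\mu(K\cdot A^{-1}\cdot A)$ remains below the relevant threshold, which needs an approximation argument), but these are secondary. The submodularity transfer and the bound $c(E)\geq(\eps/2)\mu(E)$ are fine; the two gaps above are where the proposal fails, and the paper's short convolution/gap argument is precisely what delivers both the subgroup and the exact-coset conclusion.
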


\begin{proof}  We consider the convolution
\begin{align*}
 f(x) &:= \frac{1}{\mu(A)} 1_{A} * 1_{A^{-1}}(x) \\
&= \frac{1}{\mu(A)} \int_G 1_{A}(y) 1_{A^{-1}}( y^{-1} x )\ d\mu(y) \\
&= \frac{1}{\mu(A)} \mu( A \cap x \cdot A ).
\end{align*}
Since $1_A, 1_{A^{-1}}$ are bounded, compactly supported functions, we see that the convolution $f = 1_A * 1_{A^{-1}}$ is a continuous, compactly supported function.  If $x$ lies in the support $A \cdot A^{-1}$ of $f$, then $x \in A \cdot A^{-1}$, and thus $x = ab^{-1}$ for some $a,b \in A$.  But then
$$ 
f(x) = \frac{1}{\mu(A)} \mu( (a^{-1} \cdot A) \cap (b^{-1} \cdot A) ).
$$ 
Since $a \cdot A$ and $b \cdot A$ both lie in $A^{-1} \cdot A$ and have measure $\mu(A)$, we see from the hypothesis $\mu(A^{-1} \cdot A) \leq (2-\eps) \mu(A)$ and the inclusion-exclusion principle that we thus have the uniform lower bound
\begin{equation}\label{feps}
f(x) \geq \eps
\end{equation}
on the support $A \cdot A^{-1}$ of $f$.  In other words, there is a ``gap'' in the range of $f$, in that it cannot take on values in the interval $(0,\eps)$.  This gap disconnects the support $A \cdot A^{-1}$ from its complement; both sets become both open and closed.  In particular, $A \cdot A^{-1}$ is now compact.  By continuity and compactness, this implies that there exists a neighbourhood $U$ of the identity such that $U \cdot A \cdot A^{-1} = A \cdot A^{-1}$.  Letting $H$ be the group generated by $U$, we conclude that $H$ is open and contained in the compact set $A \cdot A^{-1}$, and thus must also be compact, with $A \cdot A^{-1}$ the union of finitely many right-cosets of $H$ as required.
\end{proof}

Now we return to the discrete setting, and establish the following weaker version of Theorem \ref{main}:

\begin{theorem}\label{sand} Let $A$ be a finite subset of a multiplicative group $G$ such that $|A^{-1} \cdot A| \leq (2-\eps) |A|$.  Then there exists a finite subgroup $H$ with $|H| \ll_\eps |A|$ such that $A$ can be covered by $O_\eps(1)$ right-cosets of $H$.
\end{theorem}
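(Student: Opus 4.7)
The plan is to follow the continuous proposition above, with ``continuity and connectedness'' replaced by a discrete approximate-periodicity step, using the gap in the range of the convolution $1_A * 1_{A^{-1}}$ as the crucial rigidity. First I would introduce
$$ f(x) := \frac{1}{|A|} (1_A * 1_{A^{-1}})(x) = \frac{|A \cap x \cdot A|}{|A|}, $$
and repeat the inclusion-exclusion argument from the continuous proof: for $x = ab^{-1} \in A \cdot A^{-1}$, the sets $a^{-1} A$ and $b^{-1} A$ both sit inside $A^{-1} \cdot A$ and have cardinality $|A|$, so the hypothesis $|A^{-1} A| \leq (2-\eps)|A|$ forces them to overlap in at least $\eps|A|$ elements, giving $f(x) \geq \eps$. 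Outside $A \cdot A^{-1}$, $f$ vanishes, so the range of $f$ misses the interval $(0, \eps)$. Summing, we also obtain $|A \cdot A^{-1}| \leq |A|/\eps$, i.e., a Pl\"{u}nnecke-style small-doubling bound on $A$ depending only on $\eps$.

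Next I would consider the left stabiliser $H := \{g \in G : g \cdot A A^{-1} = A A^{-1}\}$, which is a subgroup of $G$. Since $e \in A A^{-1}$, we have $H \subseteq A A^{-1}$, so $|H| \leq |A|/\eps$; and $A A^{-1}$ is a union of right cosets $H x$, so $A \subseteq A A^{-1} \cdot a$ (for any fixed $a \in A$) is covered by $|A A^{-1}|/|H|$ right cosets of $H$. The theorem thus reduces to producing a matching lower bound $|H| \gg_\eps |A|$.

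The key observation is that the gap lets us upgrade approximate invariance into exact invariance of $A A^{-1}$: if $g \in G$ satisfies $\|\tau_g f - f\|_\infty < \eps$, where $\tau_g f(x) := f(g^{-1} x)$, then $g$ cannot map a point of $\{f \geq \eps\} = A A^{-1}$ to $\{f = 0\}$ or conversely, and so $g \in H$. To produce many such $g$ I would invoke a noncommutative Croot--Sisask almost-periodicity lemma applied to $1_A * 1_{A^{-1}}$: using the doubling bound $|A A^{-1}| \leq |A|/\eps$ established above, this yields a set $T \subseteq G$ of $L^p$-almost-periods of $f$ with $|T| \gg_\eps |A|$; after passing to a suitable bounded-order self-convolution of $f$ (whose range still exhibits a quantitative gap depending only on $\eps$), one upgrades to the $L^\infty$-bound needed to deduce $T \subseteq H$.

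The principal obstacle is precisely this last step --- upgrading from $L^p$- to $L^\infty$-almost-periodicity in the noncommutative setting. This is where the argument becomes genuinely more delicate than its abelian counterpart, in which a direct Bogolyubov/Fourier argument supplies the symmetries immediately. Once $|H| \gg_\eps |A|$ is in hand, combining with the upper bound from the previous paragraph yields that $A$ is covered by $|A A^{-1}|/|H| = O_\eps(1)$ right cosets of $H$, with $|H| \leq |A|/\eps \ll_\eps |A|$, as required.
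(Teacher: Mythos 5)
Your first two steps are fine: the inclusion--exclusion argument giving the gap $f(x) \geq \eps$ on the support $A \cdot A^{-1}$ of $f = \frac{1}{|A|} 1_A * 1_{A^{-1}}$ is exactly the paper's (and it does yield $|A\cdot A^{-1}| \leq |A|/\eps$), and your reduction to lower-bounding the exact left stabiliser $H$ of $A\cdot A^{-1}$ is a correct and clean observation. But the proof has a genuine gap at precisely the point you flag as ``the principal obstacle'': producing $\gg_\eps |A|$ elements $g$ with $\|\tau_g f - f\|_\infty < \eps$. Membership in the exact stabiliser tolerates \emph{no} exceptional set, so you genuinely need $L^\infty$ control of $f$ itself at every point of $G$. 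The Croot--Sisask route you propose does not deliver this: noncommutative Croot--Sisask gives $L^p$ almost-periods of $1_A * 1_{A^{-1}}$, and the standard H\"older upgrade converts these into $L^\infty$ almost-periods only of a \emph{further} convolution (e.g.\ $1_A * 1_{A^{-1}} * 1_A*1_{A^{-1}}$), whose support is a longer product set; your assertion that this self-convolution ``still exhibits a quantitative gap depending only on $\eps$'' is unjustified, since the inclusion--exclusion mechanism producing the gap uses the hypothesis $|A^{-1}\cdot A| \leq (2-\eps)|A|$ and does not transfer to the bigger support (whose size is not even controlled by the hypothesis without invoking the very structure being proved). This is exactly the difficulty the paper's remark points to: the $L^p$ regularity of Croot--Sisask, with its small exceptional sets, does not appear to suffice for this application.

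The paper's actual proof gets around this by invoking a much deeper input, Sanders' Proposition 20.1 from the quantitative non-abelian idempotent theorem: an approximation $F$ of $f$ (a smoothed version $\frac{1}{|S|}1_S * \frac{1}{|S|}1_S * f$) together with a large symmetric set $S'$ such that the continuity-type bound $|F(s'x)-F(x)| \ll \sigma$ holds for \emph{all} $x \in G$ and $s' \in S'$, and $f$ is $\ell^2$-close to $F$ along $S'$-translates, again uniformly in $x$. This $L^\infty$-in-$x$ control is what allows the gap to be iterated across the whole group: the superlevel set $\{F > 2\eps/3\}$ is exactly closed under left multiplication by $S'$, hence contains the group $H$ generated by $S'$, giving $|S'| \ll |H| \ll |A|/\eps$; the covering of $A$ is then finished by a pigeonhole/Ruzsa-covering step (some right-translate of $A$ meets $H$ in $\gg_\eps |H|$ points), rather than by your exact-coset-structure argument. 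So while your outline correctly identifies where the difficulty lies, it does not resolve it, and the tool you suggest is the one the paper explicitly says falls short; the missing ingredient is Sanders' uniform (spectral-theoretic) approximation result or a substitute of comparable strength.
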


Here we use the asymptotic notation $X \ll Y$, $Y \gg X$, or $X=O(Y)$ to denote the assertion $X \leq CY$ for some absolute constant $C$; if we need the implied constant $C$ to depend on a parameter, we indicate this by subscripts, e.g. $X \ll_\eps Y$ denotes assertion $X \leq C_\eps Y$ for some constant $C_\eps$ depending on $\eps$.

To prove this theorem, we analyse the function
\begin{align*}
 f(x) &:= \frac{1}{|A|} 1_A * 1_{A^{-1}}(x)\\
&:= \frac{1}{|A|} \sum_{y \in G} 1_A(y) 1_A( y^{-1} x )\\
&= \frac{1}{|A|} |A \cap x \cdot A|.
\end{align*}

As in the continuous case, we can show that $f$ is bounded away from zero on its support, in the sense that
\begin{equation}\label{fip}
 f(x) \geq \eps
\end{equation}
for all $x$ in the support $A \cdot A^{-1}$ of $f$. So we once again have a gap in the range of $f$. However, in this discrete setting, we do not have any obvious quantitative control on the ``continuity'' of the convolution $f$ to exploit this gap (this is ultimately because $A$ does not have good ``measurability'' properties).  However, it turns out that $f$ is controlled in a certain \emph{Wiener algebra} $A(G)$, which roughly speaking is the non-commutative analogue of functions with absolutely convergent Fourier transform.  In the abelian setting, the fact that we have control in the Wiener algebra is a consequence of Plancherel's theorem (that asserts that $L^2$ functions have square-summable Fourier coefficients), the Cauchy-Schwarz inequality, and the observation that convolution corresponds to pointwise multiplication of Fourier coefficients.  It turns out that an analogous statement can be made in the non-abelian case.  

In the classical setting of Fourier series, functions in the Wiener algebra have absolutely convergent Fourier series, and in particular are necessarily continuous.  A deep result of Sanders \cite{sanders} asserts, roughly speaking, that in more general non-abelian groups $G$, functions in the Wiener algebra $A(G)$ can be uniformly approximated by continuous functions ``outside of a set of negligible measure''.  A precise version of this statement is as follows:

\begin{proposition}[Almost uniform approximation by continuous functions]  Let $f$ be as above, and let $\sigma > 0$.  Then there exists a symmetric neighbourhood $S'$ of the identity with $|S'| \gg_\sigma |A|$ and a function $F: G \to \R^+$ such that
\begin{equation}\label{das}
 |F(s' x) - F(x)| \ll \sigma
\end{equation}
and
\begin{equation}\label{fss}
(\sum_{s' \in S'} |F(s' x) - f(s'x)|^2)^{1/2} \ll \sigma |S'|^{1/2}
\end{equation}
for all $s' \in S'$ and $x \in G$.
\end{proposition}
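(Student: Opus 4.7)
The natural approach is to specialise the noncommutative Wiener algebra theorem of Sanders \cite{sanders} to the specific function $f$. The first step is to bound the Wiener algebra norm of $f$. Since $f = \frac{1}{|A|} 1_A * 1_{A^{-1}}$ and $\|1_A\|_{\ell^2(G)} = \|1_{A^{-1}}\|_{\ell^2(G)} = |A|^{1/2}$, the factorisation inequality $\|g * h\|_{A(G)} \leq \|g\|_{\ell^2} \|h\|_{\ell^2}$ --- which in the nonabelian setting follows from the identity $\widehat{g*h}(\pi) = \hat g(\pi) \hat h(\pi)$ for each irreducible unitary representation $\pi$, together with Plancherel and Cauchy--Schwarz applied in the Hilbert--Schmidt inner product --- gives $\|f\|_{A(G)} \leq 1$. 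Equivalently, $f$ is a normalised positive definite function with $f(e) = 1$.

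The second step is to construct $F$ and $S'$ by low-frequency truncation. Expand $f$ as a noncommutative Fourier series, and let $F$ be the partial sum obtained by keeping only those representations $\pi$ on which the Hilbert--Schmidt norm of $\hat f(\pi)$ exceeds a threshold depending on $\sigma$. By Plancherel and the bound $\|f\|_{A(G)} \leq 1$, only a $\sigma$-controlled amount of spectrum survives. Define $S'$ to be the symmetric neighbourhood of the identity consisting of those $s' \in G$ for which each surviving $\pi(s')$ lies within $O(\sigma)$ of the identity operator in operator norm. Then \eqref{das} is essentially automatic: translation by $s'$ shifts each retained Fourier mode by $O(\sigma)$, and the total perturbation is controlled by the Wiener norm. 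The local $\ell^2$ approximation \eqref{fss} follows from a second Parseval-type computation: the discarded high-frequency tail $f - F$ has small total $\ell^2$ mass, and this smallness passes to the sum over $s' \in S'$ (for any fixed shift $x$) after applying Plancherel on $G$ and using that $S'$ itself is large.

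The main obstacle, and the reason we invoke \cite{sanders} as a black box rather than reproving the statement from scratch, is obtaining the size lower bound $|S'| \gg_\sigma |A|$. In the abelian case, Bohr neighbourhoods defined by level sets of scalar characters admit a simple Dirichlet/pigeonhole size estimate. In the nonabelian case, the surviving representations may have arbitrarily large dimension, and a direct pigeonhole over matrix entries loses too much; extracting a Bohr-type set of the correct size requires the more delicate tensor-product and rank-truncation analysis carried out by Sanders. Once Sanders's theorem is granted in the form stated in \cite{sanders}, the proposition follows by applying it to the specific $f$ of Wiener norm $\le 1$ constructed above.
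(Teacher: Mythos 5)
Your proposal is correct and follows essentially the same route as the paper: the paper's entire proof is a citation of Sanders's Proposition 20.1 in \cite{sanders}, which is exactly the black box you invoke after checking (via Plancherel and Cauchy--Schwarz) that $\|f\|_{A(G)} \leq 1$. The only cosmetic difference is your heuristic sketch of $F$ as a sharp spectral truncation, whereas the paper notes that in Sanders's argument $F$ is actually the mollification $\frac{1}{|S|} 1_S * \frac{1}{|S|} 1_S * f$ for a suitable set $S$; since you defer the hard part (the lower bound $|S'| \gg_\sigma |A|$) to \cite{sanders} anyway, this does not affect the correctness of your argument.
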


\begin{proof} See \cite[Proposition 20.1]{sanders}.  The ``continuous'' function $F$ is in fact of the form $F := \frac{1}{|S|} 1_S * \frac{1}{|S|} 1_S * f$ for some set $S$ larger than $S'$ (but small compared to $A$).
\end{proof}

\begin{remark} Sanders' paper uses a nontrivial amount of spectral theory (or nonabelian Fourier analysis).  It is possible to use ``softer'' (and significantly simpler) methods to obtain weaker regularity results on convolutions (giving ``$L^p$ continuity'' rather than ``$L^\infty$ continuity''), as was done in \cite{croot}, but unfortunately it does not appear that these easier results suffice for the application here (which relies on iterating the control given by continuity, and so cannot handle the small exceptional sets allowed by an $L^p$ regularity result).
\end{remark}

We return to the proof of Theorem \ref{sand}.
Let $\sigma > 0$ be a sufficiently small parameter depending on $\eps$ to be chosen later, and let $F$ be the approximation to $f$ given above.  From \eqref{das}, \eqref{fss} we have
$$ (\sum_{s' \in S'} |F(x) - f(s'x)|^2)^{1/2} \ll \sigma |S'|^{1/2}.$$
From this and the gap property \eqref{fip} we see that $F$ also has a gap, in that it cannot take values in the interval $[\eps/3,2\eps/3]$ (say) if $\sigma$ is small enough.  In particular, from this and \eqref{das} we see that the set $\Omega := \{ x \in G: F(x) > 2\eps/3 \}$ is closed under left-multiplication by $S'$, if $\sigma$ is small enough; since this set contains $1$, it must therefore contain the group $H$ generated by $S'$.  On the other hand, $F$ has an $\ell^1$ norm of $O(|A|)$, and so by Markov's inequality we have 
$$|H| \leq |\Omega| \ll |A| / \eps;$$
in the other direction we have
$$ |H| \geq |S'| \gg_\sigma |A|,$$
so $H$ is of comparable size to $A$.    As $F$ is large ($\gg \eps$) on $H$, we can use \eqref{fss} to ensure (for $\sigma$ small enough) that $f$ is also large on $H$ on average (specifically, $\|f\|_{\ell^2(H)} \gg |H|^{1/2} / \eps$); this ensures from the pigeonhole principle that some right-translate of $A$ has large intersection (cardinality $\gg |H|/\eps$) with $H$, and this combined with the bounded doubling of $A$ ensures that $A$ is covered by $O( \frac{|A|}{\eps |H|} ) = O_{\eps,\sigma}(1)$ right-translates of $H$, and the claim follows.

\end{document}